\UseRawInputEncoding
\documentclass[12pt]{amsart}
\usepackage{cases}
\usepackage{amscd}
\setlength{\textwidth}{6 in} \setlength{\textheight}{8.6 in}
\hoffset=-46pt
\usepackage{amsmath}
\usepackage{amsthm}
\usepackage{amssymb}
\usepackage{mathrsfs}
\usepackage{amsfonts}
\usepackage{color}

\usepackage{pifont}

\usepackage{upgreek}
\usepackage{bm}


\usepackage{indentfirst, latexsym, bm, amsmath, eufrak, amsthm}
\usepackage{amsmath}
\usepackage{amsthm}
\usepackage{amssymb}
\usepackage{mathrsfs}
\usepackage{amsfonts}

\usepackage{pifont}

\usepackage{upgreek}
\usepackage{bm}
\setlength{\parskip}{0pt}

\newcommand*{\tr}{\mathrm{tr}}
\numberwithin{equation}{section}
\newtheorem{theo}{Theorem} 

\newtheorem{lem}{Lemma}
\newtheorem{mcor}{Corollary}
\newtheorem{remark}{Remark}


\newcommand*{\D}[1]{\ensuremath{\nabla^{#1}}}

\begin{document}
\title[A Schur type lemma for the Mean Berwald curvature]{A Schur type lemma for the Mean Berwald curvature in Finsler geometry}

\author[Ming Li]{Ming Li$^1$}



\address{Ming Li: Mathematical Science Research Center,
Chongqing University of Technology,
Chongqing 400054, P. R. China }

\email{mingli@cqut.edu.cn}


\thanks{$^1$~Partially supported by NSFC (Grant No. 11871126) }

\maketitle

\begin{abstract}
In this short paper, we study a symmetric covariant tensor in Finsler geometry, which is called the mean Berwald curvature. We first investigate the geometry of the fibres as the submanifolds of the tangent sphere bundle on a Finsler manifold. Then we prove that if the mean Berwald curvature is isotropic along fibres, then the Berwald scalar curvature is constant along fibres.
\end{abstract}


\section*{Introduction}

Let $(M,F)$ be a Finsler manifold of $\dim M=n$. Let $SM=\{(x,y)\in TM|F(x,y)=1\}$ be the unit tangent sphere bundle of $M$ with the natural projection $\pi:SM\to M$. The tangent bundle $T(SM)$ admits a natural horizontal subbundle $H(SM)$ and a Sasaki-type metric $g^{T(SM)}$. The Hilbert form $\omega=F_{y^i}dx^i$ defines a contact structure on $SM$. The Reeb field is just the spray $\mathbf{G}$.

Let $\mathcal{D}$ be the contact distribution $\{\omega=0\}$. There is an almost complex structure $J$ on $\mathcal{D}$.
$J$ extends to be an endomorphism  $T(SM)$ by defining $J(\mathbf{G})=0$. It is clear that $(SM,J,\mathbf{G},\omega,g^{T(SM)})$ gives rise to a contact metric structure.

Let $\mathscr{F}:=V(SM)$ be the integrable distribution given by the tangent spaces of the fibres of $SM$.  Let $p:T(SM)\to \mathscr{F} $ be the natural projection. Then the tangent bundle $T(SM)$ admits a splitting
\begin{equation}\label{splitting}
	T(SM)=\mathfrak{l}\oplus J\mathscr{F}\oplus \mathscr{F}=:H(SM)\oplus \mathscr{F}.
\end{equation}
where  $\mathfrak{l}$ is the trivial bundle generated by $\mathbf{G}$. The cotangent bundle $T^*(SM)$ has the corresponding splitting.

The Berwald connection $\nabla^{\rm Be}$ on the horizontal subbundle $H(SM)$ is a linear connection without torsion. The curvature $R^{\rm Be}=(\nabla^{\rm Be})^2$ of the Berwald connection is an ${\rm End}(H(SM))$-valued two form on $SM$. 
With respect to the natural splitting (\ref{splitting}), the curvature $R^{\rm Be}$ is divided into two parts
\begin{equation}
	R^{\rm Be}=\widetilde{R}+\widetilde{P},
\end{equation} 
where $\widetilde{R}$ contains pure \textit{hh}-forms, $\widetilde{P}$ contains \textit{hv}-forms.
The \textit{mean Berwald curvature} or the $\mathbf{E}$-curvature is defined as the trace of the Berwald curvature $\widetilde{P}$.

Let $\{\mathbf{e}_1,\ldots,\mathbf{e}_n,\mathbf{e}_{n+1},\ldots,\mathbf{e}_{2n-1}\}$ be a local adapted orthonormal frame with respect to the Sasaki-type Riemannian metric
\begin{align}\label{sasaki metric}
	g^{T(SM)}=\sum_{i}\omega^i\otimes\omega^i+\sum_{\bar{\alpha}}\omega^{\bar{\alpha}}\otimes\omega^{\bar{\alpha}}=:g+\dot{g}
\end{align}
on $SM$, where $\{\mathbf{e}_1,\ldots,\mathbf{e}_n\}$ spans $H(SM)$, $\mathbf{e}_n=\mathbf{G}$  and $\mathbf{e}_{\bar{\alpha}}=-J\mathbf{e}_\alpha$. Let $\theta=\left\{\omega^1,\ldots,\omega^n,\omega^{n+1},\ldots,\omega^{2n-1}\right\}$ be the dual frame, then $\omega^n=\omega$ is the Hilbert form. 

Under the adapted dual frame, the mean Berwald curvature has the following expression 
\begin{equation}\label{E}
	\mathbf{E}:=\tr \tilde{P}=:E_{\gamma\mu}\omega^\gamma\wedge\omega^{\bar{\mu}}.
\end{equation}
Using the almost complex structure $J$ and the $\mathbf{E}$-curvature (\ref{E}), we define the following symmetric vertical tensor
\begin{equation}
	\dot{\mathbf{E}}:=E_{\gamma\mu}\omega^{\bar{\gamma}}\otimes\omega^{\bar{\mu}}.
\end{equation} 
The trace of $\dot{\mathbf{E}}$ with respect to the vertical metric $\dot{g}$ is called the \textit{Berwald scalar curvature}
\begin{equation}
	\mathsf{e}:={\rm tr}_{\dot{g}}\dot{\mathbf{E}}.
\end{equation}
In this study, a Finsler manifold is called to have \textit{isotropic mean Berwald curvature} if the following condition is satisfied
\begin{equation}\label{iso E}
	\dot{\mathbf{E}}=\frac{\mathsf{e}}{n-1}\dot{g}.
\end{equation}
One notices that the Berwald scalar curvature $\mathsf{e}$ is a smooth function on $SM$ for a Finsler manifold. In the previous papers \cite{Li3,Lizhang}, a Finsler manifold $(M,F)$ is called to have\textit{ isotropic Berwald scalar curvature} if $\mathsf{e}\in C^{\infty}(M)$. 

The main result of this short note is a Schur type lemma for the mean Berwald curvature.
\begin{theo}\label{th 1}
	Let $(M,F)$ be a Finsler manifold and $\dim M=n\geq3$. If the mean Berwald curvature is isotropic, then the Berwald scalar curvature is isotropic.
\end{theo}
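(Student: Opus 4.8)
The plan is to carry out the classical Schur argument fibrewise on $SM$. Writing the isotropy hypothesis \eqref{iso E} in the adapted orthonormal frame, it reads $E_{\gamma\mu}=\frac{\mathsf{e}}{n-1}\delta_{\gamma\mu}$ for $\gamma,\mu=1,\dots,n-1$, where $E_{\gamma\mu}$ is the symmetric array attached to $\dot{\mathbf{E}}$. Since \emph{isotropic Berwald scalar curvature} means precisely that $\mathsf{e}$ descends to a function on $M$, i.e.\ that $\mathsf{e}$ is constant on each fibre of $\pi:SM\to M$, the goal is to show that every vertical derivative of $\mathsf{e}$ vanishes: $\mathbf{e}_{\bar\nu}(\mathsf{e})=0$ for all $\nu$.

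First I would differentiate the isotropy relation in the vertical (fibre) directions, using the covariant derivative induced on the fibres by the vertical part of the Berwald connection. Because the vertical metric $\dot{g}$ is parallel for this connection (its components are $\delta_{\gamma\mu}$ in the orthonormal frame), differentiation only hits the scalar factor, giving
$$E_{\gamma\mu|\nu}=\tfrac{1}{n-1}\,\mathsf{e}_{|\nu}\,\delta_{\gamma\mu},$$
where $|\nu$ denotes the vertical covariant derivative and $\mathsf{e}_{|\nu}=\mathbf{e}_{\bar\nu}(\mathsf{e})$. The engine of the proof is then a symmetry (Codazzi-type) identity: the vertical derivative $E_{\gamma\mu|\nu}$ is totally symmetric in its three indices. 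Granting this, the displayed identity forces $\mathsf{e}_{|\nu}\,\delta_{\gamma\mu}=\mathsf{e}_{|\gamma}\,\delta_{\nu\mu}$; contracting $\mu$ with $\gamma$ and summing over the $n-1$ fibre directions yields $(n-1)\,\mathsf{e}_{|\nu}=\mathsf{e}_{|\nu}$, i.e.\ $(n-2)\,\mathsf{e}_{|\nu}=0$. The hypothesis $n\ge 3$ makes $n-2\ge 1$, so $\mathsf{e}_{|\nu}=0$ for all $\nu$; hence $\mathsf{e}$ is constant along the fibres and $\mathsf{e}\in C^\infty(M)$, which is the conclusion.

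The crux, and the step I expect to be the main obstacle, is establishing the total symmetry of $E_{\gamma\mu|\nu}$. This is exactly where the geometry of the fibres as submanifolds of $SM$ enters. The tensor $E_{\gamma\mu}$ is the vertical trace of the totally vertically-symmetric Berwald curvature $\widetilde{P}$, so symmetry in $\gamma,\mu$ is automatic; the nontrivial content is the interchange of a differentiated index $\nu$ with $\gamma$. I would obtain it from the second Bianchi identity for the Berwald connection together with the structure equations of the fibres, being careful about the correction terms that appear when passing from frame derivatives to covariant derivatives. These corrections involve the Cartan tensor and the intrinsic curvature of the indicatrix, and the point is that under the isotropy assumption $E_{\gamma\mu}\propto\delta_{\gamma\mu}$ they organize into totally symmetric expressions (for instance, the Cartan correction collapses to a multiple of $C_{\gamma\mu\nu}$), so that they do not spoil the required symmetry. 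Verifying this cancellation—rather than the short algebraic Schur contraction—is where the real work lies.
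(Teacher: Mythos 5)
Your overall skeleton matches the paper's: differentiate the isotropy relation along the fibres, invoke a Codazzi-type symmetry for the vertical derivative of $\mathbf{E}$, and run the standard Schur contraction $(n-2)\,\mathsf{e}_{:\gamma}=0$. The contraction itself is done correctly. But the proof has a genuine gap exactly where you flag it: the Codazzi identity is asserted, not established, and that identity is the entire mathematical content of the theorem. The paper's version of it (its equation \eqref{codazzi E}) reads
\begin{equation*}
E_{\alpha\beta:\gamma}-E_{\alpha\gamma:\beta}=H_{\alpha\beta\mu}E_{\mu\gamma}-H_{\alpha\gamma\mu}E_{\mu\beta},
\end{equation*}
so in general the vertical covariant derivative of $\mathbf{E}$ is \emph{not} totally symmetric; the defect is a commutator-type expression in the Cartan tensor and $\mathbf{E}$, which vanishes under the isotropy hypothesis only because $H_{\alpha\beta\gamma}$ is totally symmetric. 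The paper does not get this from the second Bianchi identity of the Berwald connection, as you propose. Instead it uses (i) the identity $E_{\alpha\beta}=S_{:\alpha:\beta}+H_{\alpha\beta\mu}S_{:\mu}+S\delta_{\alpha\beta}$ expressing $\mathbf{E}$ through the vertical Hessian of the $S$-curvature (equation \eqref{hessian S}, imported from earlier work), (ii) the structure equations of the fibres as Riemannian submanifolds of $(SM,g^{T(SM)})$ — the total symmetry of $H_{\alpha\beta\gamma:\mu}$ and the Gauss-type formula $\dot{R}_{\beta\alpha\mu\nu}=H_{\beta\gamma\mu}H_{\gamma\alpha\nu}-H_{\beta\gamma\nu}H_{\gamma\alpha\mu}-(\delta_{\beta\mu}\delta_{\alpha\nu}-\delta_{\beta\nu}\delta_{\alpha\mu})$ — and (iii) the Ricci identity on the fibre, whose curvature terms cancel the quadratic Cartan terms produced by differentiating \eqref{hessian S}. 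Deriving these fibre structure equations occupies the first section of the paper; your sketch compresses all of this into the unproven assertion that "the corrections organize into totally symmetric expressions."

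Two further cautions. First, the connection matters: the argument needs the fibre Levi-Civita connection $\dot{\nabla}^{\mathscr{F}}$ induced by the Sasaki metric (whose connection forms are $\omega_\beta^\gamma+H_{\beta\gamma\alpha}\omega^{\bar{\alpha}}$, i.e.\ Chern plus Cartan correction), for which $\dot{g}$ is parallel and the Ricci identity involves the fibre curvature above. Your "vertical part of the Berwald connection" is not metric-compatible with $\dot{g}$ in general, so the step "differentiation only hits the scalar factor" is not justified for the connection you name. Second, your heuristic that total symmetry should hold because $E_{ij}$ is a vertical Hessian of $S$ in natural coordinates (so naive third partials commute) is the right intuition, but converting it into a statement about covariant derivatives in an orthonormal vertical frame is precisely where the Cartan-tensor and fibre-curvature terms enter; showing they cancel under isotropy is the proof, and it is missing.
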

\begin{remark}
	In literature \cite{ChernShen,SS}, a Finsler manifold is called to have isotropic mean Berwald curvature if both the conditions (\ref{iso E}) and  $\mathsf{e}\in C^{\infty}(M)$ are satisfied. By Theorem \ref{th 1}, one notices that the assumption about the isotropic Berwald scalar curvature is unnecessary.  
\end{remark}

Combining the results in \cite{Li3} and Theorem \ref{th 1}, we obtain the following corollary.
\begin{mcor}\label{cor 1}
	Let $(M,F)$ be a Finsler manifold and $\dim M\geq3$. The following conditions are equivalent:
	\begin{enumerate}
		\item[(1).]  $F$ has isotropic mean Berwald curvature, i.e., the condition (\ref{iso E}) holds;
		\item[(2).]  the Berwald scalar curvature is isotropic, i.e., $\mathsf{e}\in C^{\infty}(M)$;
		\item[(3).]  $F$ has weakly isotropic $S$-curvature, and
		\begin{equation*}
			S=\frac{1}{n-1}\mathsf{e}+\pi^*\xi(\mathbf{G}),
		\end{equation*}
		where $\mathsf{e}\in C^{\infty}(M)$ and $\xi\in\Omega^1(M)$. 
	\end{enumerate}
\end{mcor}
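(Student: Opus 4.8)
The plan is to prove the equivalence by running the cycle $(1)\Rightarrow(2)\Rightarrow(3)\Rightarrow(1)$, drawing the first arrow from Theorem \ref{th 1}, the second from \cite{Li3}, and the third from a short fibrewise computation. The unifying tool is the standard dictionary between the two curvatures: on $SM$ the $S$-curvature is a smooth function $S$, and, up to the normalization fixed in (\ref{E}), the components $E_{\gamma\mu}$ are those of the vertical (fibrewise) Hessian of $S$. Thus $\dot{\mathbf{E}}$ is the fibre Hessian of $S$ and $\mathsf{e}=\mathrm{tr}_{\dot g}\dot{\mathbf{E}}$ is the associated fibre-trace; the $1$-homogeneity of $S$ in $y$ is what makes this trace behave like a Laplacian-plus-zeroth-order operator on each fibre sphere $S_xM\cong S^{n-1}$, whose geometry was described in the earlier part of the paper.

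The arrow $(1)\Rightarrow(2)$ is exactly Theorem \ref{th 1}: the pointwise proportionality (\ref{iso E}) forces $\mathsf{e}$ to be constant along fibres, so $\mathsf{e}\in C^{\infty}(M)$. For $(3)\Rightarrow(1)$ I would compute $\dot{\mathbf{E}}$ directly from the hypothesis. Homogenizing along the fibres, condition (3) reads $S=\tfrac{1}{n-1}\mathsf{e}\,F+\pi^*\xi(\mathbf{G})$, where $\pi^*\xi(\mathbf{G})=\xi_i(x)y^i$ is linear in $y$ and $\tfrac{1}{n-1}\mathsf{e}\,F$ is a function on $M$ times $F$. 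The linear term has vanishing vertical Hessian and so does not contribute to $\mathbf{E}$. The remaining term has vertical Hessian proportional to $F_{y^iy^j}$, and since $FF_{y^iy^j}$ is the angular metric, which in the adapted frame restricts on the vertical directions to $\dot g=\sum_{\bar\alpha}\omega^{\bar\alpha}\otimes\omega^{\bar\alpha}$, we obtain $\dot{\mathbf{E}}=\lambda\,\dot g$ for a function $\lambda\in C^{\infty}(M)$. Taking $\mathrm{tr}_{\dot g}$ identifies $\lambda=\tfrac{\mathsf{e}}{n-1}$, which is precisely (\ref{iso E}); this is $(1)$ and closes the cycle.

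The substantive step, and the main obstacle, is $(2)\Rightarrow(3)$, for which \cite{Li3} is indispensable. The mechanism is a spherical–harmonic decomposition of $S$ on each fibre, $S|_{S_xM}=\sum_{k\ge0}S_k$, into eigencomponents of the fibre Laplacian. Because of $1$-homogeneity the operator producing $\mathsf{e}$ from $S$ acts on the degree-$k$ component by the scalar $(n-1)-k(k+n-2)$: it annihilates the degree-$1$ part (the first nonzero Laplace eigenvalue being $n-1$), is nonzero on the constants, and is nonzero on every $k\ge2$. The hypothesis $\mathsf{e}\in C^{\infty}(M)$ says that $\mathsf{e}$ has no fibre-dependence, i.e. lives in the degree-$0$ part; matching harmonics then forces $S_k=0$ for all $k\ge2$, fixes $S_0=\tfrac{1}{n-1}\mathsf{e}$, and leaves $S_1$ free. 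The surviving degree-$1$ components assemble, by smoothness in $x$, into a global $1$-form $\xi\in\Omega^1(M)$ with $S_1=\pi^*\xi(\mathbf{G})$, giving exactly the weakly isotropic $S$-curvature of the stated form.

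The delicate point here—hence the reason the fibre-geometry preliminaries are needed—is to justify the Laplace eigenvalue bookkeeping when the fibre carries the Finslerian vertical metric $\dot g$ rather than the round one, and to control the zeroth-order correction forced by homogeneity. The remaining two implications are, by contrast, either immediate from Theorem \ref{th 1} or reduce to the short vertical-Hessian computation above, so the weight of the proof rests on importing the structural analysis of \cite{Li3}.
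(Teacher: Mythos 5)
Your proposal matches the paper's own (one-line) proof: the paper simply combines Theorem \ref{th 1} for $(1)\Rightarrow(2)$ with the results of \cite{Li3} for $(2)\Leftrightarrow(3)$, closing the cycle with the standard vertical-Hessian computation for $(3)\Rightarrow(1)$ that you also give. Your sketch of the internal mechanism of \cite{Li3} is only heuristic as written --- the fibres $(S_xM,\dot g)$ are not round spheres, and by (\ref{hessian S}) the relevant operator is not the bare fibre Hessian but carries the Cartan correction $H_{\alpha\beta\mu}S_{:\mu}$ and the zeroth-order term $S\delta_{\alpha\beta}$, so round-sphere eigenvalue bookkeeping does not literally apply --- but since you explicitly defer that step to \cite{Li3}, exactly as the paper does, the argument is sound and follows the same route.
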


\begin{remark}
	In the recent paper \cite{Cra3}, the author proves an attractive property about the the S-curvature based on his formula involved in the derivative of the coefficient of the Busemann-Hausdorff volume form. 
	
	To emphasize this important result, we would like to state it alone as follows: a Finsler structure $F$ has weakly isotropic $S$-curvature, i.e., (3) in Corollary \ref{cor 1} holds if and only if  the following statement is true:
	\begin{enumerate}
		\item[(4).] the $S$-curvature satisfies
	\begin{equation*}
		S=\frac{1}{n-1}\mathsf{e}+\pi^*df(\mathbf{G}),
	\end{equation*}
	where $\mathsf{e},f\in C^{\infty}(M)$. If the volume form on $M$ in the definition of the distortion is the Bussmann-Hausdorff one, then $f=0$ and $S$ is isotropic.
	\end{enumerate}
\end{remark}

\medskip

In this paper we adopt the index range and notations:
$$1\leq i,j,k,\ldots\leq n, \quad 1\leq \alpha,\beta,\gamma,\ldots\leq n-1, \quad 1\leq A,B,C,\ldots\leq 2n-1,$$

and $\bar{\alpha}:=n+\alpha,\bar{\beta}:=n+\beta,\bar{\gamma}:=n+\gamma,\ldots.$

\medskip

{\bf Acknowledgements.}  The author
would like to thanks Professor Huitao Feng for his consistent support and encouragement. 

\section{The geometry of the fibres of the sphere bundle of a Finsler manifold}

Let $(M,F)$ be a $n$-dimensional Finsler manifold. 
Let $$\nabla^{\rm Ch}:\Gamma(H(SM))\to\Omega^1(SM;H(SM))$$ be the Chern connection, which can be extended to a map
\begin{align*}
	\nabla^{\rm Ch}:\Omega^*(SM;H(SM))\rightarrow \Omega^{*+1}(SM;H(SM)),
\end{align*}
where $\Omega^*(SM;H(SM)):=\Gamma(\Lambda^*(T^*SM)\otimes H(SM))$ denotes the horizontal valued differential forms on $SM$.
It is well known that the symmetrization of Chern connection  $\hat{\nabla}^{\rm Ch}$ is the Cartan connection.
The difference between $\hat{\nabla}^{\rm Ch}$ and $\nabla^{\rm Ch}$ will be referred as the Cartan endomorphism,
\begin{align*}
	H= \hat{\nabla}^{\rm Ch}-\nabla^{\rm Ch}\quad\in\Omega^1(SM,{\rm
		End}(H(SM))).
\end{align*}
Set $H=H_{ij}\omega^j\otimes\mathbf{e}_i$. By Lemma 3 and Lemma 4 in \cite{FL}, $H_{ij}=H_{ji}=H_{ij\gamma}\omega^{\bar{\gamma}}$ has the following form under natural coordinate systems
\begin{align*}
	H_{ij\gamma}=-A_{pqk}u_i^pu_j^qu_{\gamma}^k,
\end{align*}
where $A_{ijk}=\frac{1}{4}F[F^2]_{y^iy^jy^k}$ and $u_i^j$ are the transformation matrix from adapted orthonormal frames to natural frames.


Let $\bm{\omega}=(\omega_j^i)$ be the connection matrix of the Chern
connection with respect to the local adapted orthonormal frame field,
i.e.,
\begin{align*}
	\D{\rm Ch}\mathbf{e}_i=\omega_i^j\otimes\mathbf{e}_j.
\end{align*}

\begin{lem}[\cite{BaoChernShen,ChernShen,FL}]\label{sturcture eq}
	The connection matrix $\bm{\omega}=(\omega_j^i)$ of $\nabla^{\rm Ch}$ is determined by the following structure equations,
	\begin{equation}\left\{
		\begin{aligned}
			&d\vartheta=-\bm{\omega}\wedge\vartheta,\\
			&\bm{\omega}+\bm{\omega}^t=-2H,
		\end{aligned}\right.\label{Chern connection structure eq. matrix}
	\end{equation}
	where $\vartheta=(\omega^1,\ldots,\omega^{n})^t$. Furthermore,
	$$\omega_{\alpha}^{n}=-\omega^{\alpha}_{n}=\omega^{\bar{\alpha}},\quad{\rm and}\quad \omega^{n}_{n}=0.$$
\end{lem}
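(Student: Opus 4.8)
The plan is to prove the two displayed equations as the \emph{fundamental lemma of Finsler geometry}: the torsion-freeness of $\nabla^{\rm Ch}$ yields the first equation, almost-metric-compatibility yields the second, and together they determine $\bm\omega$ uniquely. First I would translate the defining properties of the Chern connection into the matrix equations. Torsion-freeness, written on the coframe $\vartheta=(\omega^1,\dots,\omega^n)^t$, is exactly $d\vartheta=-\bm\omega\wedge\vartheta$. For the second equation I would use the relation $H=\hat{\nabla}^{\rm Ch}-\nabla^{\rm Ch}$ recorded above together with the fact, also recalled in the excerpt, that $\hat{\nabla}^{\rm Ch}$ is the metric Cartan connection; hence its connection matrix $\hat{\bm\omega}=\bm\omega+H$ is skew-symmetric in the orthonormal frame. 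Since $H$ is symmetric ($H_{ij}=H_{ji}$), skew-symmetry of $\bm\omega+H$ is precisely $\bm\omega+\bm\omega^t=-2H$.

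The heart of the argument is uniqueness. Suppose $\bm\omega$ and $\bm\omega'$ both satisfy the two equations and set $\bm\psi=\bm\omega-\bm\omega'$. Subtracting the first equations gives $\psi^i_j\wedge\omega^j=0$, and subtracting the second gives $\psi_{ij}+\psi_{ji}=0$ (the $H$-terms cancel). Expanding $\psi^i_j=a^i_{jk}\omega^k+b^i_{j\gamma}\omega^{\bar\gamma}$ in the full adapted coframe and using the linear independence of the $2$-forms $\omega^k\wedge\omega^j$ and $\omega^{\bar\gamma}\wedge\omega^j$, the condition $\psi^i_j\wedge\omega^j=0$ forces all vertical coefficients $b^i_{j\gamma}$ to vanish and makes $a^i_{jk}$ symmetric in $(j,k)$; the skew-symmetry makes $a_{ijk}$ antisymmetric in $(i,j)$, and the classical cyclic permutation $a_{ijk}=-a_{jik}=\cdots=-a_{ijk}$ gives $a_{ijk}=0$, so $\bm\psi=0$. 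Existence is the matching half: the antisymmetric horizontal part of $\bm\omega$ is produced from the structure functions of $d\vartheta$ by the usual Koszul formula and its symmetric part is set equal to $-H$, after which one checks both equations directly.

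For the supplementary relations I would exploit the homogeneity of the Cartan tensor $A_{pqk}y^k=0$. Since the distinguished frame vector $\mathbf e_n=\mathbf G$ corresponds to the radial direction $y/F$, the coefficient $u_n^p$ is proportional to $y^p/F$, so $A_{pqk}u_n^p=0$ and hence every component $H_{ij}$ carrying an index $n$ vanishes; in particular $H_{nn}=0$ and $H_{n\alpha}=0$. The second structure equation then gives $2\omega^n_n=-2H_{nn}=0$ and $\omega^n_\alpha+\omega^\alpha_n=-2H_{n\alpha}=0$, which are $\omega^n_n=0$ and $\omega^\alpha_n=-\omega^n_\alpha$. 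For the last identity I would feed the Hilbert form $\omega^n=\omega$ into the first structure equation: using $\omega^n_n=0$ it reads $d\omega=\omega^\alpha\wedge\omega^n_\alpha$. On the other hand $d\omega$ is the fundamental $2$-form of the contact metric structure $(SM,J,\mathbf G,\omega,g^{T(SM)})$, which in the adapted coframe equals $\omega^\alpha\wedge\omega^{\bar\alpha}$ because $\mathbf e_{\bar\alpha}=-J\mathbf e_\alpha$. Comparing and invoking Cartan's lemma yields $\omega^n_\alpha-\omega^{\bar\alpha}=\lambda_{\alpha\beta}\omega^\beta$ with $\lambda$ symmetric, and this residual horizontal ambiguity is eliminated because the adapted horizontal distribution is the canonical (Chern) nonlinear connection determined by $\mathbf G$, for which $\omega^n_\alpha$ is purely vertical; thus $\lambda=0$ and $\omega^n_\alpha=\omega^{\bar\alpha}$.

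I expect this final identification to be the delicate step. The cancellation in the uniqueness argument and the derivation of $\omega^n_n=0$ and $\omega^\alpha_n=-\omega^n_\alpha$ are routine once conventions are fixed, but pinning down $\omega^n_\alpha=\omega^{\bar\alpha}$ requires matching the $J$-defined vertical coframe with the connection-defined one, i.e.\ verifying that the contact metric structure on $SM$ used here is precisely the canonical one induced by the Chern nonlinear connection. This is where the sign and normalization conventions for $J$, for $d\omega$, and for the nonlinear connection must be handled with care.
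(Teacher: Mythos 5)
You should first note what you are comparing against: the paper gives no proof of Lemma \ref{sturcture eq} at all --- it is imported with citations from \cite{BaoChernShen,ChernShen,FL} --- so your reconstruction is measured against the standard literature argument, and it essentially is that argument. Your translation of the two defining properties is correct ($\hat{\bm\omega}=\bm\omega+H$ is skew-symmetric in an orthonormal frame because the Cartan connection is metric, and $H_{ij}=H_{ji}$ turns skewness into $\bm\omega+\bm\omega^t=-2H$), and your uniqueness argument is exactly the fundamental lemma as proved in \cite{BaoChernShen}: the difference $\bm\psi$ satisfies $\psi^i_j\wedge\omega^j=0$, which by linear independence of $\omega^k\wedge\omega^j$ and $\omega^{\bar\gamma}\wedge\omega^j$ kills the vertical coefficients and symmetrizes $a^i_{jk}$ in $(j,k)$, while skewness in $(i,j)$ and the three-fold index cycle force $a_{ijk}=0$. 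The derivation of $\omega^n_n=0$ and $\omega^\alpha_n=-\omega^n_\alpha$ from $A_{pqk}y^k=0$ and $u^p_n=y^p/F$ (so that every $H$-component carrying an index $n$ vanishes) is also correct. The one genuinely fragile step is the one you yourself flagged: identifying $\omega^n_\alpha=\omega^{\bar\alpha}$ by equating $d\omega$ with the fundamental $2$-form of the contact metric structure is circular in the setting of this paper, since the assertion that $(SM,J,\mathbf{G},\omega,g^{T(SM)})$ is a contact metric structure is itself normally \emph{verified} using precisely these structure equations. The clean repair avoids contact geometry altogether: the Chern nonlinear connection is characterized by horizontal parallelism of the tautological section $\ell=\mathbf{e}_n$, so $\omega^j_n(\mathbf{e}_k)=0$ and $\omega^j_n$ is purely vertical, while the explicit vertical derivative of $\ell$ (in natural coordinates $\nabla^{\rm Ch}\ell=(\delta y^i/F)\otimes\partial_i$ restricted to $SM$) gives $\omega^\alpha_n(\mathbf{e}_{\bar\beta})=-\delta_{\alpha\beta}$ once the convention $\mathbf{e}_{\bar\alpha}=-J\mathbf{e}_\alpha$ is unwound; this yields $\omega^\alpha_n=-\omega^{\bar\alpha}$ directly, making your Cartan-lemma detour and the $d\omega$ comparison unnecessary. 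With that substitution your outline is a complete and correct proof of the quoted lemma; as written it is correct modulo the convention-matching you identified as the delicate point.
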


In \cite{FL}, the authors proved that the Chern connection is just the Bott connection on $H(SM)$ in the theory of foliation (c.f. \cite{Zhang}).

Let $R^{\rm Ch}=\left(\nabla^{\rm Ch}\right)^2$  be the curvature of $\nabla^{\rm Ch}$.
Let $\Omega=\left(\Omega_j^i\right)$ be the curvature forms of $R^{\rm Ch}$. From the torsion freeness, the curvature form has no pure vertical differential form
\begin{align}\label{curvature chern connection}
	\Omega_j^i:=d\omega_j^i-\omega_j^k\wedge\omega_k^i=:\frac{1}{2}R_{j~kl}^{~i}\omega^k\wedge\omega^l+P_{j~k\gamma}^{~i}\omega^k\wedge\omega^{\bar{\gamma}}.
\end{align}

The Sasaki metric (\ref{sasaki metric}) induces a Euclidean metric $\dot{g}$ on the vertical bundle $\mathscr{F}$.
As the restriction the Levi-Civita connection $\nabla^{T(SM)}$ on $\mathscr{F}$, $\nabla^{\mathscr{F}}:=p\nabla^{T(SM)}p$ is a Euclidean connection of the bundle $(\mathscr{F},\dot{g})$. It is clear that along each fibre of $S_xM$, $x\in M$, $\nabla^{\mathscr{F}}$ is just the Levi-Civita connection $\dot{\nabla}^{\mathscr{F}}$ of the Riemannian manifold $(S_xM, \dot{g}|_{T(S_xM)})$.

The following lemma can be found in \cite{Mo}. We present here a new proof.
\begin{lem}\label{lem nabla f}
	Let $\Theta=(\Theta_B^A)$ be the connection form of the Levi-Civita connection $\nabla^{T(SM)}$ of $g^{T(SM)}$ with respect to the adapted frame $\{\mathbf{e}_A\}$. Then the connection forms of 	$\nabla^{\mathscr{F}}$ are determined by
	\begin{equation*}
		\nabla^{\mathscr{F}}\mathbf{e}_{\bar{\beta}}=\Theta_{\bar{\beta}}^{\bar{\gamma}}\otimes\mathbf{e}_{\bar{\gamma}},
	\end{equation*}
	  and 
	\begin{equation}\label{connection forms F}
		\Theta_{\bar{\beta}}^{\bar{\gamma}}=\omega_{\beta}^\gamma+H_{\beta\gamma\alpha}\omega^{\bar{\alpha}},
	\end{equation}
	where $\omega_{\beta}^\gamma$ are the connection forms of the Chern connection and $H_{\beta\gamma\alpha}\omega^{\bar{\alpha}}$ are the coefficients of the Cartan endomorphism. 
\end{lem}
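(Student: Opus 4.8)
The plan is to prove the two assertions in turn, the first being essentially formal and the second carrying all the geometric content. Since $\nabla^{\mathscr{F}}=p\nabla^{T(SM)}p$ and $\nabla^{T(SM)}\mathbf{e}_{\bar\beta}=\Theta_{\bar\beta}^A\otimes\mathbf{e}_A$, applying the fibrewise projection $p\colon T(SM)\to\mathscr{F}$ annihilates the horizontal summands $\mathbf{e}_i$ and retains exactly the $\mathbf{e}_{\bar\gamma}$, which gives at once $\nabla^{\mathscr{F}}\mathbf{e}_{\bar\beta}=\Theta_{\bar\beta}^{\bar\gamma}\otimes\mathbf{e}_{\bar\gamma}$. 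Thus everything reduces to identifying the block $\Theta_{\bar\beta}^{\bar\gamma}$ of the Levi-Civita connection form of $g^{T(SM)}$.

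To compute $\Theta_{\bar\beta}^{\bar\gamma}$ I would exploit that the Levi-Civita connection is the unique connection which is skew-symmetric in the adapted orthonormal coframe, $\Theta_B^A+\Theta_A^B=0$, and torsion free, $d\omega^A=-\Theta_B^A\wedge\omega^B$; equivalently I would apply the Koszul formula, which in an orthonormal frame is entirely governed by the structure functions (Lie brackets) of the frame. These brackets are read off from the exterior derivatives of the coframe. Using $\omega^{\bar\gamma}=\omega_\gamma^n$ together with the curvature equation (\ref{curvature chern connection}) and Lemma \ref{sturcture eq}, I obtain $d\omega^{\bar\gamma}=\omega_\gamma^\beta\wedge\omega^{\bar\beta}+\Omega_\gamma^n$ and $d\omega^i=-\omega_j^i\wedge\omega^j$, and I would then split these into their $hh$-, $hv$- and $vv$-parts.

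For the vertical coefficient $\Theta_{\bar\beta}^{\bar\gamma}(\mathbf{e}_{\bar\alpha})$ only the $vv$-terms survive, and since $\Omega_\gamma^n$ contains no purely vertical form the computation reduces to the vertical components of the forms $\omega_\gamma^\beta$, which are controlled by $\bm{\omega}+\bm{\omega}^t=-2H$. Substituting into Koszul and using that the Cartan endomorphism $H_{\beta\gamma\alpha}$ is totally symmetric (because $A_{ijk}$ is), the spurious $H$-terms cancel and I obtain $\Theta_{\bar\beta}^{\bar\gamma}(\mathbf{e}_{\bar\alpha})=\omega_\beta^\gamma(\mathbf{e}_{\bar\alpha})+H_{\beta\gamma\alpha}$, which is exactly the vertical part of the asserted formula.

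The horizontal coefficient $\Theta_{\bar\beta}^{\bar\gamma}(\mathbf{e}_i)$ is where I expect the real obstacle. Here I would first use that $\mathscr{F}=V(SM)$ is integrable, so that $[\mathbf{e}_{\bar\beta},\mathbf{e}_{\bar\gamma}]$ is again vertical and the corresponding Koszul term drops out; the two surviving terms then reduce the problem, up to the overall factor of $2$, to $2\omega_\beta^\gamma(\mathbf{e}_i)+P_{\beta~i\gamma}^{~n}-P_{\gamma~i\beta}^{~n}$. Consequently the desired identity $\Theta_{\bar\beta}^{\bar\gamma}(\mathbf{e}_i)=\omega_\beta^\gamma(\mathbf{e}_i)$ is equivalent to the symmetry $P_{\beta~i\gamma}^{~n}=P_{\gamma~i\beta}^{~n}$. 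This symmetry does not follow from torsion freeness alone, which only symmetrizes the first lower and the horizontal index; the key point is that the component $P_{j~k\mu}^{~n}$ of the Chern $hv$-curvature with upper index $n$, i.e.\ the contraction in the spray direction $\mathbf{e}_n=\mathbf{G}$, is (up to sign) the Landsberg tensor, which is totally symmetric in its three indices. Granting this, the horizontal part matches as well, and assembling the two parts yields $\Theta_{\bar\beta}^{\bar\gamma}=\omega_\beta^\gamma+H_{\beta\gamma\alpha}\omega^{\bar\alpha}$. As a final consistency check one verifies that the right-hand side is automatically skew in $\beta$ and $\gamma$, since $\bm{\omega}+\bm{\omega}^t=-2H$ together with the symmetry of $H$ makes its symmetric part vanish, exactly as required of a Levi-Civita connection form in an orthonormal frame.
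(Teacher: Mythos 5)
Your proposal is correct and follows essentially the same route as the paper's proof: the Koszul formula for the Levi-Civita connection in the adapted orthonormal frame, combined with the Chern connection structure equations of Lemma \ref{sturcture eq} and (\ref{curvature chern connection}), split into vertical and horizontal parts. The only differences are cosmetic — you work with $\Omega_\gamma^n$ where the paper uses $\Omega_n^\gamma$ (equivalent, since $\Omega_\gamma^n=-\Omega_n^\gamma$ here), and you explicitly name the Landsberg-type symmetry $P_{\beta~i\gamma}^{~n}=P_{\gamma~i\beta}^{~n}$ that the paper's final cancellation $-P_{n~i\beta}^{~\gamma}+P_{n~i\gamma}^{~\beta}=0$ uses tacitly.
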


\begin{proof}
	By the definition of the Levi-Civita connection, Lemma \ref{sturcture eq} and (\ref{curvature chern connection}), we obtain
	\begin{equation*}
		\begin{split}
			\langle\nabla^{T(SM)}_{\mathbf{e}_{\bar{\alpha}}}\mathbf{e}_{\bar{\beta}},\mathbf{e}_{\bar{\gamma}}\rangle
			=&\frac{1}{2}(\mathscr{L}_{\mathbf{e}_{\bar{\beta}}}g^{T(SM)})(\mathbf{e}_{\bar{\alpha}},\mathbf{e}_{\bar{\gamma}})+\frac{1}{2}d\omega^{\bar{\beta}}(\mathbf{e}_{\bar{\alpha}},\mathbf{e}_{\bar{\gamma}})\\
			=&-\frac{1}{2}\left(\langle[\mathbf{e}_{\bar{\beta}},\mathbf{e}_{\bar{\alpha}}],\mathbf{e}_{\bar{\gamma}}\rangle+\langle[\mathbf{e}_{\bar{\beta}},\mathbf{e}_{\bar{\gamma}}],\mathbf{e}_{\bar{\alpha}}\rangle+\langle[\mathbf{e}_{\bar{\alpha}},\mathbf{e}_{\bar{\gamma}}],\mathbf{e}_{\bar{\beta}}\rangle\right)\\
			=&-\frac{1}{2}\left((\Omega_n^{\gamma}+\omega_n^i\wedge\omega_i^{\gamma})(\mathbf{e}_{\bar{\beta}},\mathbf{e}_{\bar{\alpha}})+(\Omega_n^{\alpha}+\omega_n^i\wedge\omega_i^{\alpha})(\mathbf{e}_{\bar{\beta}},\mathbf{e}_{\bar{\gamma}})\right.\\
			&\left.\quad\quad +(\Omega_n^{\beta}+\omega_n^i\wedge\omega_i^{\beta})(\mathbf{e}_{\bar{\alpha}},\mathbf{e}_{\bar{\gamma}})\right)\\
			=&-\frac{1}{2}\left(-\omega_\beta^\gamma(\mathbf{e}_{\bar{\alpha}})+\omega_\alpha^\gamma(\mathbf{e}_{\bar{\beta}})-\omega_\beta^\alpha(\mathbf{e}_{\bar{\gamma}})+\omega^\alpha_\gamma(\mathbf{e}_{\bar{\beta}})
			-\omega_\alpha^\beta(\mathbf{e}_{\bar{\gamma}})+\omega_\gamma^\beta(\mathbf{e}_{\bar{\alpha}})\right)\\
			=&\omega_\beta^\gamma(\mathbf{e}_{\bar{\alpha}})+H_{\beta\gamma\alpha},
		\end{split}
	\end{equation*}
	and
	\begin{equation*}
		\begin{split}
			\langle\nabla^{T(SM)}_{\mathbf{e}_i}\mathbf{e}_{\bar{\beta}},\mathbf{e}_{\bar{\gamma}}\rangle
			=&\frac{1}{2}(\mathscr{L}_{\mathbf{e}_{\bar{\beta}}}g^{T(SM)})(\mathbf{e}_i,\mathbf{e}_{\bar{\gamma}})+\frac{1}{2}d\omega^{\bar{\beta}}(\mathbf{e}_i,\mathbf{e}_{\bar{\gamma}})\\
			=&-\frac{1}{2}\left(\langle[\mathbf{e}_{\bar{\beta}},\mathbf{e}_i],\mathbf{e}_{\bar{\gamma}}\rangle+\langle[\mathbf{e}_{\bar{\beta}},\mathbf{e}_{\bar{\gamma}}],\mathbf{e}_i\rangle+\langle[\mathbf{e}_i,\mathbf{e}_{\bar{\gamma}}],\mathbf{e}_{\bar{\beta}}\rangle\right)\\
			=&-\frac{1}{2}\left((\Omega_n^{\gamma}+\omega_n^i\wedge\omega_i^{\gamma})(\mathbf{e}_{\bar{\beta}},\mathbf{e}_i)+\omega^j\wedge\omega_j^i(\mathbf{e}_{\bar{\beta}},\mathbf{e}_{\bar{\gamma}})
			+(\Omega_n^{\beta}+\omega_n^i\wedge\omega_i^{\beta})(\mathbf{e}_i,\mathbf{e}_{\bar{\gamma}})\right)\\
			=&-\frac{1}{2}\left(-P^{~\gamma}_{n~i\beta }-\omega_\beta^\gamma(\mathbf{e}_i)+P^{~\beta}_{n~i\gamma }+\omega_\gamma^\beta(\mathbf{e}_i)\right)\\
			=&\omega_\beta^\gamma(\mathbf{e}_i),
		\end{split}
	\end{equation*}
	where we denote $\langle\cdot,\cdot\rangle:=g^{T(SM)}(\cdot,\cdot)$. 
\end{proof}
One notices that the other parts of $\Theta$ can be obtained by a similar way and will be omitted here.

In Lemma \ref{lem nabla f}, we modify the Cartan endmorphism to a vertical tensor
\begin{equation}
	\dot{H}:=H_{\alpha\beta\gamma}\omega^{\bar{\alpha}}\otimes\omega^{\bar{\beta}}\otimes\omega^{\bar{\gamma}}.
\end{equation}
Furthermore, the covariant differential of $\dot{H}$ by the connection $\nabla^{\mathscr{F}}$ can be expressed as
\begin{equation}\label{dH}
\begin{split}
		\nabla^{\mathscr{F}}\dot{H}&=(dH_{\alpha\beta\gamma}-H_{\mu\beta\gamma}\Theta_{\bar{\alpha}}^{\bar{\mu}}-H_{\alpha\mu\gamma}\Theta_{\bar{\beta}}^{\bar{\mu}}-H_{\alpha\beta\mu}\Theta_{\bar{\gamma}}^{\bar{\mu}})\otimes\omega^{\bar{\alpha}}\otimes\omega^{\bar{\beta}}\otimes\omega^{\bar{\gamma}}\\
		&=:(H_{\alpha\beta\gamma|i}\omega^i+H_{\alpha\beta\gamma:\mu}\omega^{\bar{\mu}})\otimes\omega^{\bar{\alpha}}\otimes\omega^{\bar{\beta}}\otimes\omega^{\bar{\gamma}}.
\end{split}
\end{equation}
We will call that $H_{\alpha\beta\gamma|i}$ the horizontal derivatives of $\dot{H}$ by $\nabla^{\mathscr{F}}$ along $\mathbf{e}_i$. One finds that the horizontal derivatives of $\dot{H}$ by  $\nabla^{\mathscr{F}}$ and the horizontal derivatives of $H$ by  $\nabla^{\rm Ch}$ coincide.
Similarly, $H_{\alpha\beta\gamma:\mu}$ will be called the vertical derivatives of $\dot{H}$ by $\nabla^{\mathscr{F}}$ along $\mathbf{e}_{\mu}$.

Let $\left(\nabla^{\mathscr{F}}\right)^2$ be the curvature of $\nabla^{\mathscr{F}}$. Set
\begin{equation}\label{curvature def}
	\left(\nabla^{\mathscr{F}}\right)^2\mathbf{e}_{\bar{\beta}}=\Omega_{\bar{\beta}}^{\bar{\gamma}}\otimes\mathbf{e}_{\bar{\gamma}}.
\end{equation}
In the following lemma, we calculate the curvature forms of $\left(\nabla^{\mathscr{F}}\right)^2$.
\begin{lem}
	The curvature forms $\Omega_{\bar{\beta}}^{\bar{\gamma}}$ of the connection $\nabla^{\mathscr{F}}$ are given by
	\begin{equation}\label{curvature c}
	\begin{split}
		\Omega_{\bar{\beta}}^{\bar{\alpha}}=&\frac{1}{2}(\Omega_{\beta}^{\alpha}-\Omega^{\beta}_{\alpha})+\frac{1}{2}(H_{\beta\alpha\nu,\mu}-H_{\beta\alpha\mu,\nu})\omega^{\bar{\mu}}\wedge\omega^{\bar{\nu}}\\
	&+\frac{1}{2}(H_{\beta\gamma\mu}H_{\gamma\alpha\nu}-H_{\beta\gamma\nu}H_{\gamma\alpha\mu})\omega^{\bar{\mu}}\wedge\omega^{\bar{\nu}}-\frac{1}{2}(\delta_{\beta\mu}\delta_{\alpha\nu}-\delta_{\beta\nu}\delta_{\alpha\mu})\omega^{\bar{\mu}}\wedge\omega^{\bar{\nu}},
	\end{split}
	\end{equation}
where $\Omega_{\beta}^{\alpha}$ are the curvature forms of the Chern connection.
\end{lem}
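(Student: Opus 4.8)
The plan is to compute the curvature matrix directly from the connection matrix $\Theta_{\bar\beta}^{\bar\gamma}$ obtained in Lemma~\ref{lem nabla f}. Since $\nabla^{\mathscr{F}}\mathbf{e}_{\bar\beta}=\Theta_{\bar\beta}^{\bar\gamma}\otimes\mathbf{e}_{\bar\gamma}$, the definition (\ref{curvature def}), with the same sign convention fixed in (\ref{curvature chern connection}), gives the structure equation
\begin{equation*}
	\Omega_{\bar\beta}^{\bar\alpha}=d\Theta_{\bar\beta}^{\bar\alpha}-\Theta_{\bar\beta}^{\bar\mu}\wedge\Theta_{\bar\mu}^{\bar\alpha}.
\end{equation*}
I would substitute $\Theta_{\bar\beta}^{\bar\alpha}=\omega_{\beta}^{\alpha}+H_{\beta\alpha\mu}\omega^{\bar\mu}$ from (\ref{connection forms F}), writing $\eta_\beta^\alpha:=H_{\beta\alpha\mu}\omega^{\bar\mu}$, so that the right-hand side splits into a pure Chern part $d\omega_\beta^\alpha-\omega_\beta^\mu\wedge\omega_\mu^\alpha$, a derivative part $d\eta_\beta^\alpha$, two cross terms $-\omega_\beta^\mu\wedge\eta_\mu^\alpha-\eta_\beta^\mu\wedge\omega_\mu^\alpha$, and a quadratic part $-\eta_\beta^\mu\wedge\eta_\mu^\alpha$.

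The first point to address is the index range: here the contraction index $\mu$ runs only over $1,\dots,n-1$, whereas the Chern curvature $\Omega_\beta^\alpha$ in (\ref{curvature chern connection}) contracts over $1,\dots,n$. Using $\omega_\beta^n=\omega^{\bar\beta}$, $\omega_n^\alpha=-\omega^{\bar\alpha}$ and $\omega_n^n=0$ from Lemma~\ref{sturcture eq}, the missing $k=n$ term contributes $\omega_\beta^n\wedge\omega_n^\alpha=-\omega^{\bar\beta}\wedge\omega^{\bar\alpha}$, so the pure Chern part equals $\Omega_\beta^\alpha-\omega^{\bar\beta}\wedge\omega^{\bar\alpha}$. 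The second summand is exactly the constant-curvature term $-\tfrac12(\delta_{\beta\mu}\delta_{\alpha\nu}-\delta_{\beta\nu}\delta_{\alpha\mu})\omega^{\bar\mu}\wedge\omega^{\bar\nu}$, reflecting that each fibre $(S_xM,\dot g)$ is a round unit sphere.

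For the remaining pieces I would expand $d\eta_\beta^\alpha=dH_{\beta\alpha\mu}\wedge\omega^{\bar\mu}+H_{\beta\alpha\mu}\,d\omega^{\bar\mu}$, replace $dH_{\beta\alpha\mu}$ by its covariant differential via (\ref{dH}), and compute $d\omega^{\bar\mu}=d\omega_\mu^n$ from the structure equations (which introduces curvature and connection-form products). Reassembling $dH_{\beta\alpha\mu}$ with the connection corrections coming from the two cross terms produces the vertical covariant derivatives $H_{\beta\alpha\nu,\mu}$, which, antisymmetrized against $\omega^{\bar\mu}\wedge\omega^{\bar\nu}$, yield $\tfrac12(H_{\beta\alpha\nu,\mu}-H_{\beta\alpha\mu,\nu})\omega^{\bar\mu}\wedge\omega^{\bar\nu}$. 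The quadratic term $\tfrac12(H_{\beta\gamma\mu}H_{\gamma\alpha\nu}-H_{\beta\gamma\nu}H_{\gamma\alpha\mu})\omega^{\bar\mu}\wedge\omega^{\bar\nu}$ then arises by combining $-\eta_\beta^\mu\wedge\eta_\mu^\alpha$ with the quadratic-in-$H$ contributions hidden inside $H_{\beta\alpha\mu}\,d\omega^{\bar\mu}$ (these appear through the symmetric part of $\omega_\mu^\gamma$, governed by $\bm\omega+\bm\omega^t=-2H$), which is what fixes the overall sign.

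The main obstacle is explaining why only the antisymmetric part $\tfrac12(\Omega_\beta^\alpha-\Omega_\alpha^\beta)$ of the Chern curvature survives. Because $\nabla^{\mathscr{F}}$ is a Euclidean connection on $(\mathscr{F},\dot g)$, the matrix $\Omega_{\bar\beta}^{\bar\alpha}$ is skew in $\alpha\leftrightarrow\beta$, so the symmetric part $\tfrac12(\Omega_\beta^\alpha+\Omega_\alpha^\beta)$ must cancel. To exhibit this I would differentiate the second structure equation $\bm\omega+\bm\omega^t=-2H$ in (\ref{Chern connection structure eq. matrix}): its exterior derivative expresses $\Omega_\beta^\alpha+\Omega_\alpha^\beta$ through $d(H_{\beta\alpha\mu}\omega^{\bar\mu})$ and products of connection forms, which is precisely the symmetric-in-$(\alpha,\beta)$ content of the derivative, cross, and quadratic terms above, so these cancel and leave $\tfrac12(\Omega_\beta^\alpha-\Omega_\alpha^\beta)$. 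The delicate bookkeeping is to keep track of the mixed $hv$-components generated by $d\omega^{\bar\mu}$ and to verify that they enter only through this antisymmetrized Chern curvature and through $H_{\beta\alpha\nu,\mu}$, producing no spurious terms; this check establishes (\ref{curvature c}).
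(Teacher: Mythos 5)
Your proposal is correct and takes essentially the same route as the paper: expand $\Omega_{\bar{\beta}}^{\bar{\alpha}}=d\Theta_{\bar{\beta}}^{\bar{\alpha}}-\Theta_{\bar{\beta}}^{\bar{\gamma}}\wedge\Theta_{\bar{\gamma}}^{\bar{\alpha}}$ using (\ref{connection forms F}), use Lemma \ref{sturcture eq} to convert the $k=n$ terms into the constant-curvature term, reassemble $dH_{\beta\alpha\mu}$ into the covariant derivatives of $\dot{H}$ via (\ref{dH}), and kill the symmetric part of the Chern curvature. The only cosmetic difference is that where you propose to re-derive the needed symmetry relation by differentiating $\bm{\omega}+\bm{\omega}^t=-2H$, the paper simply cites the equivalent Bianchi identity $\Omega_{\alpha}^{\beta}+\Omega_{\beta}^{\alpha}=2H_{\alpha\beta\gamma}\Omega^{\gamma}_n-2H_{\alpha\beta\gamma|i}\omega^i$, which in one stroke also cancels the horizontal-derivative terms $H_{\beta\alpha\mu|i}\omega^i\wedge\omega^{\bar{\mu}}$ exactly as your skew-symmetry argument requires.
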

\begin{proof}
	By the definition (\ref{curvature def}), the curvature forms of $\nabla^{\mathscr{F}}$ are given by 
	\begin{equation}\label{curvature c0}
		\Omega_{\bar{\beta}}^{\bar{\alpha}}=d\Theta_{\bar{\beta}}^{\bar{\alpha}}-\Theta_{\bar{\beta}}^{\bar{\gamma}}\wedge\Theta_{\bar{\gamma}}^{\bar{\alpha}}.
	\end{equation}
Substituting the connection forms (\ref{connection forms F}) into (\ref{curvature c0}), by Lemma \ref{Chern connection structure eq. matrix} and (\ref{curvature chern connection}), one obtains 
\begin{equation}\label{curvature c1}
	\begin{split}
		\Omega_{\bar{\beta}}^{\bar{\alpha}}
		=&d(\omega_{\beta}^{\alpha}+H_{\beta\alpha\mu}\omega^{\bar{\mu}})-(\omega_{\beta}^{\gamma}+H_{\beta\gamma\mu}\omega^{\bar{\mu}})\wedge(\omega_{\gamma}^{\alpha}+H_{\gamma\alpha\nu}\omega^{\bar{\nu}})\\
		=&d\omega_{\beta}^{\alpha}-\omega_{\beta}^{i}\wedge\omega_{i}^{\alpha}+\omega_{\beta}^{n}\wedge\omega_{n}^{\alpha}-(H_{\beta\gamma\mu}\omega^{\bar{\mu}})\wedge(H_{\gamma\alpha\nu}\omega^{\bar{\nu}})\\
		&+dH_{\beta\alpha\mu}\wedge\omega^{\bar{\mu}}-H_{\beta\alpha\gamma}(\Omega_n^{\gamma}+\omega_n^{\mu}\wedge\omega_{\mu}^{\gamma})-\omega_{\beta}^{\gamma}\wedge(H_{\gamma\alpha\nu}\omega^{\bar{\nu}})-(H_{\beta\gamma\mu}\omega^{\bar{\mu}})\wedge\omega_{\gamma}^{\alpha}\\
		=&\Omega_{\beta}^{\alpha}-H_{\beta\alpha\gamma}\Omega_n^{\gamma}-\omega^{\bar{\beta}}\wedge\omega^{\bar{\alpha}}-(H_{\beta\gamma\mu}\omega^{\bar{\mu}})\wedge(H_{\gamma\alpha\nu}\omega^{\bar{\nu}})\\
		&+dH_{\beta\alpha\mu}\wedge\omega^{\bar{\mu}}-H_{\beta\alpha\gamma}(\Theta_{\bar{\mu}}^{\bar{\gamma}}-H_{\mu\gamma\nu}\omega^{\bar{\nu}})\wedge\omega^{\bar{\mu}}\\
		&-(\Theta_{\bar{\beta}}^{\bar{\gamma}}-H_{\beta\gamma\mu}\omega^{\bar{\mu}})\wedge(H_{\gamma\alpha\nu}\omega^{\bar{\nu}})-(H_{\beta\gamma\mu}\omega^{\bar{\mu}})\wedge(\Theta_{\bar{\gamma}}^{\bar{\alpha}}-H_{\gamma\alpha\nu}\omega^{\bar{\nu}})\\
		=&\Omega_{\beta}^{\alpha}-H_{\beta\alpha\gamma}\Omega_n^{\gamma}-\omega^{\bar{\beta}}\wedge\omega^{\bar{\alpha}}+(H_{\beta\gamma\mu}\omega^{\bar{\mu}})\wedge(H_{\gamma\alpha\nu}\omega^{\bar{\nu}})\\
		&+dH_{\beta\alpha\mu}\wedge\omega^{\bar{\mu}}-H_{\beta\alpha\gamma}\Theta_{\bar{\mu}}^{\bar{\gamma}}\wedge\omega^{\bar{\mu}}-H_{\gamma\alpha\mu}\Theta_{\bar{\beta}}^{\bar{\gamma}}\wedge\omega^{\bar{\mu}}-H_{\beta\gamma\mu}\Theta_{\bar{\alpha}}^{\bar{\gamma}}\wedge\omega^{\bar{\mu}}.
	\end{split}
\end{equation}
The equations (\ref{dH}), (\ref{curvature c1}) and the following Bianchi identities (c.f. \cite{BaoChernShen,Mo})
\begin{equation*}
	\Omega_{\alpha}^{\beta}+\Omega^{\alpha}_{\beta}=2H_{\alpha\beta\gamma}\Omega^{\gamma}_n-2H_{\alpha\beta\gamma|i}\omega^i,
\end{equation*}
imply that
	\begin{equation*}
		\begin{split}
			\Omega_{\bar{\beta}}^{\bar{\alpha}}=&\Omega_{\beta}^{\alpha}-H_{\beta\alpha\gamma}\Omega_n^{\gamma}-\omega^{\bar{\beta}}\wedge\omega^{\bar{\alpha}}+(H_{\beta\gamma\mu}\omega^{\bar{\mu}})\wedge(H_{\gamma\alpha\nu}\omega^{\bar{\nu}})\\
			&+H_{\beta\alpha\mu|i}\omega^i\wedge\omega^{\bar{\mu}}+H_{\beta\alpha\mu:\nu}\omega^{\bar{\nu}}\wedge\omega^{\bar{\mu}}\\
			=&\frac{1}{2}(\Omega_{\beta}^{\alpha}-\Omega^{\beta}_{\alpha})+H_{\beta\alpha\mu:\nu}\omega^{\bar{\nu}}\wedge\omega^{\bar{\mu}}+(H_{\beta\gamma\mu}\omega^{\bar{\mu}})\wedge(H_{\gamma\alpha\nu}\omega^{\bar{\nu}})
			-\delta_{\beta\mu}\delta_{\alpha\nu}\omega^{\bar{\mu}}\wedge\omega^{\bar{\nu}}\\
			=&\frac{1}{2}(\Omega_{\beta}^{\alpha}-\Omega^{\beta}_{\alpha})+\frac{1}{2}(H_{\beta\alpha\nu:\mu}-H_{\beta\alpha\mu:\nu})\omega^{\bar{\mu}}\wedge\omega^{\bar{\nu}}\\
			&+\frac{1}{2}(H_{\beta\gamma\mu}H_{\gamma\alpha\nu}-H_{\beta\gamma\nu}H_{\gamma\alpha\mu})\omega^{\bar{\mu}}\wedge\omega^{\bar{\nu}}-\frac{1}{2}(\delta_{\beta\mu}\delta_{\alpha\nu}-\delta_{\beta\nu}\delta_{\alpha\mu})\omega^{\bar{\mu}}\wedge\omega^{\bar{\nu}}.
		\end{split}
	\end{equation*}
Hence the equation (\ref{curvature c}) is proved.
\end{proof}
The fibres of the sphere bundle $SM$ of a Finsler manifold $(M,F)$ are integral submanifolds of the distribution $\mathscr{F}$, which have the induced Riemannian metrics form the Sasaki metric $g^{T(SM)}$. 

For any $x\in M$, $(S_xM,\dot{g}|_{T(S_xM)})$ is a Riemannian manifold. The connection $\nabla^{\mathscr{F}}$ along $S_xM$ gives the corresponding Levi-Civita connection $\dot{\nabla}^{\mathscr{F}}$. 
Let $i_x:S_xM\to SM$ be the inclusion map. Set
\begin{equation*}
	\dot{\omega}^{\bar{\alpha}}:=i_x^*\omega^{\bar{\alpha}}, \quad \alpha=1,\ldots, n-1.
\end{equation*}
Then $\{\dot{\omega}^{n+1}, \ldots, \dot{\omega}^{2n-1}\}$ is the induced orthonormal dual frame of $(S_xM,\dot{g}|_{T(S_xM)})$.
Let $\dot{\Theta}_{\bar{\beta}}^{\bar{\alpha}}$ and  $\dot{\Omega}_{\bar{\beta}}^{\bar{\alpha}}$ be the connection forms and curvature forms of the Levi-Civita connection $\dot{\nabla}^{\mathscr{F}}$. The coefficients of the curvature forms are defined as
\begin{equation}\label{curvature lcd}
\dot{\Omega}_{\bar{\beta}}^{\bar{\alpha}}=:\frac{1}{2}\dot{R}_{\beta\alpha\mu\nu}\omega^{\bar{\mu}}\wedge\omega^{\bar{\nu}}.
\end{equation} 
\begin{lem}
	The structure equations of each fibre of the sphere bundle of a Finsler manifold are given by
	\begin{equation}\label{codazzi H}
	H_{\beta\alpha\nu:\mu}-H_{\beta\alpha\mu:\nu}=0,
	\end{equation}
and
\begin{equation}\label{curvature lc}
	\dot{R}_{\beta\alpha\mu\nu}=H_{\beta\gamma\mu}H_{\gamma\alpha\nu}-H_{\beta\gamma\nu}H_{\gamma\alpha\mu}-(\delta_{\beta\mu}\delta_{\alpha\nu}-\delta_{\beta\nu}\delta_{\alpha\mu}).
\end{equation}
\end{lem}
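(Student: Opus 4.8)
The plan is to obtain both identities simultaneously by restricting the curvature form $\Omega_{\bar{\beta}}^{\bar{\alpha}}$ of $\nabla^{\mathscr{F}}$ computed in (\ref{curvature c}) to a single fibre $S_xM$ through the inclusion $i_x$, and then to exploit the skew-symmetry of a Levi-Civita curvature in its frame-index pair.

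First I would record the two facts that make this restriction clean. On the one hand, since $S_xM$ is vertical, the horizontal coframe is killed by $i_x$, i.e. $i_x^*\omega^k=0$ for $1\le k\le n$. On the other hand, because $\nabla^{\mathscr{F}}$ restricts along each fibre to the intrinsic Levi-Civita connection $\dot{\nabla}^{\mathscr{F}}$ (as already observed before Lemma \ref{lem nabla f}), the connection forms obey $i_x^*\Theta_{\bar{\beta}}^{\bar{\gamma}}=\dot{\Theta}_{\bar{\beta}}^{\bar{\gamma}}$; since pullback commutes with $d$ and $\wedge$, this yields $i_x^*\Omega_{\bar{\beta}}^{\bar{\alpha}}=\dot{\Omega}_{\bar{\beta}}^{\bar{\alpha}}$.

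Next I would apply $i_x^*$ to (\ref{curvature c}). By (\ref{curvature chern connection}) the Chern curvature $\Omega_\beta^\alpha$ (and $\Omega_\alpha^\beta$) consists only of $hh$- and $hv$-forms, so every term carries a horizontal factor and the leading piece $\tfrac12(\Omega_\beta^\alpha-\Omega_\alpha^\beta)$ pulls back to zero. Writing $\dot{\omega}^{\bar{\mu}}=i_x^*\omega^{\bar{\mu}}$ and comparing with the definition (\ref{curvature lcd}), then matching the coefficients of $\dot{\omega}^{\bar{\mu}}\wedge\dot{\omega}^{\bar{\nu}}$ (all the surviving coefficient expressions being already skew in $\mu,\nu$), gives
\[
\dot{R}_{\beta\alpha\mu\nu}=(H_{\beta\alpha\nu:\mu}-H_{\beta\alpha\mu:\nu})+(H_{\beta\gamma\mu}H_{\gamma\alpha\nu}-H_{\beta\gamma\nu}H_{\gamma\alpha\mu})-(\delta_{\beta\mu}\delta_{\alpha\nu}-\delta_{\beta\nu}\delta_{\alpha\mu}).
\]

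Finally I would split this identity according to its behaviour under $\beta\leftrightarrow\alpha$. Since $H_{\beta\alpha\gamma}$ is totally symmetric (it is built from the Cartan tensor $A_{ijk}$), the derivative term $H_{\beta\alpha\nu:\mu}-H_{\beta\alpha\mu:\nu}$ is symmetric in $\beta,\alpha$, whereas the quadratic term and the $\delta$-term are skew in $\beta,\alpha$. Meanwhile $\dot{\nabla}^{\mathscr{F}}$ is metric in an orthonormal frame, so $\dot{\Omega}_{\bar{\beta}}^{\bar{\alpha}}+\dot{\Omega}_{\bar{\alpha}}^{\bar{\beta}}=0$, that is $\dot{R}_{\beta\alpha\mu\nu}=-\dot{R}_{\alpha\beta\mu\nu}$. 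Taking the symmetric part in $\beta,\alpha$ therefore annihilates the left-hand side and both skew terms on the right, forcing $H_{\beta\alpha\nu:\mu}-H_{\beta\alpha\mu:\nu}=0$, which is (\ref{codazzi H}); reinserting this into the displayed identity leaves precisely the Gauss-type equation (\ref{curvature lc}). The only delicate point is the bookkeeping of the symmetries of $H_{\beta\alpha\gamma}$ and of its vertical covariant derivative, together with checking that the two ambient terms are indeed skew in $\beta,\alpha$, so that the symmetric/antisymmetric decomposition cleanly separates the Codazzi and Gauss identities; beyond this no computation past (\ref{curvature c}) is required.
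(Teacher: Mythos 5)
Your proposal is correct and takes essentially the same route as the paper: restrict the curvature identity (\ref{curvature c}) to a fibre via $i_x^*$ (which kills the Chern curvature terms $\tfrac12(\Omega_{\beta}^{\alpha}-\Omega_{\alpha}^{\beta})$, since by (\ref{curvature chern connection}) they contain no purely vertical forms), and then read off (\ref{codazzi H}) and (\ref{curvature lc}) from the resulting identity. The only difference is that you make explicit the final symmetric/skew decomposition in $\beta,\alpha$ (using the total symmetry of $H$ and the skew-symmetry of the Levi-Civita curvature in an orthonormal frame), a step the paper compresses into ``can be derived \ldots directly.''
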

\begin{proof}
  It is clear that $\dot{\Theta}_{\bar{\beta}}^{\bar{\alpha}}=i_x^*\Theta_{\bar{\beta}}^{\bar{\alpha}}$. For any $x\in M$, the curvature forms of  $(S_xM,\dot{g}|_{T(S_xM)})$ are
 \begin{equation}\label{curvature lc0}
 \dot{\Omega}_{\bar{\beta}}^{\bar{\alpha}}=d(i_x^*\Theta_{\bar{\beta}}^{\bar{\alpha}})-(i_x^*\Theta_{\bar{\beta}}^{\bar{\gamma}})\wedge(i_x^*\Theta_{\bar{\gamma}}^{\bar{\alpha}})=i_x^*(d\Theta_{\bar{\beta}}^{\bar{\alpha}})-i_x^*(\Theta_{\bar{\beta}}^{\bar{\gamma}}\wedge\Theta_{\bar{\gamma}}^{\bar{\alpha}})=i_x^*\Omega_{\bar{\beta}}^{\bar{\alpha}}.
 \end{equation}
By (\ref{curvature chern connection}), (\ref{connection forms F}) and (\ref{curvature lc0}), one obtains
\begin{equation}\label{curvature lc1}
\begin{split}
	\dot{\Omega}_{\bar{\beta}}^{\bar{\alpha}}=&\frac{1}{2}(H_{\beta\gamma\mu}H_{\gamma\alpha\nu}-H_{\beta\gamma\nu}H_{\gamma\alpha\mu})\omega^{\bar{\mu}}\wedge\omega^{\bar{\nu}}-\frac{1}{2}(\delta_{\beta\mu}\delta_{\alpha\nu}-\delta_{\beta\nu}\delta_{\alpha\mu})\omega^{\bar{\mu}}\wedge\omega^{\bar{\nu}}\\
	&+\frac{1}{2}(H_{\beta\alpha\nu:\mu}-H_{\beta\alpha\mu:\nu})\omega^{\bar{\mu}}\wedge\omega^{\bar{\nu}}.
\end{split}
\end{equation}
By (\ref{curvature lcd}) and (\ref{curvature lc1}), the following equation holds
\begin{equation}\label{curvature lc2}
	\dot{R}_{\beta\alpha\mu\nu}=H_{\beta\gamma\mu}H_{\gamma\alpha\nu}-H_{\beta\gamma\nu}H_{\gamma\alpha\mu}-(\delta_{\beta\mu}\delta_{\alpha\nu}-\delta_{\beta\nu}\delta_{\alpha\mu})+(H_{\beta\alpha\nu:\mu}-H_{\beta\alpha\mu:\nu}).
\end{equation}
The structure equations (\ref{codazzi H}) and (\ref{curvature lc}) can be derived from (\ref{curvature lc2}) directly.
\end{proof}

\section{The proof of the Schur type lemma}

Let $dV_M=\sigma(x)dx^1\wedge\cdots\wedge dx^n$ be any volume form of $M$.
The distortion of $(M,F)$ related to $dV_M$ is defined by
$$\tau:=\ln\frac{\sqrt{\det{(g_{ij}(x,y))}}}{\sigma(x)}.$$
The derivative of $\tau$ along the vector field $\mathbf{G}$ will be denoted by $S:=\mathbf{G}(\tau)$ and called the S-curvature. In the previous studies \cite{Li3,Lizhang}, a vertical differential equation involved the S-curvature and the mean Berwald curvature plays an important role. Using notations in this paper, We restate the mentioned equation for convenience 
\begin{equation}\label{hessian S}
	S_{:\alpha:\beta}+H_{\alpha\beta\mu}S_{:\mu}+S\delta_{\alpha\beta}=E_{\alpha\beta}.
\end{equation}

In the following lemma, we present the Codazzi equation of $\dot{E}$ along the fibres of the sphere bundle of a Finsler manifold.
\begin{lem}
	The Codazzi equation of the mean Berwald curvature is given by
	\begin{equation}\label{codazzi E}
		E_{\alpha\beta:\gamma}-E_{\alpha\gamma:\beta}=H_{\alpha\beta\mu}E_{\mu\gamma}-H_{\alpha\gamma\mu}E_{\mu\beta}.
	\end{equation}
\end{lem}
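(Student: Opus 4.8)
The plan is to exploit the vertical differential equation (\ref{hessian S}), which expresses the mean Berwald curvature as a vertical Hessian of the $S$-curvature, and then to antisymmetrize one further vertical derivative, converting the resulting third-order term into fibre curvature by the Ricci identity.

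First I would write, from (\ref{hessian S}), $E_{\alpha\beta}=S_{:\alpha:\beta}+H_{\alpha\beta\mu}S_{:\mu}+S\delta_{\alpha\beta}$, take the vertical covariant derivative with respect to $\nabla^{\mathscr{F}}$ along $\mathbf{e}_{\bar{\gamma}}$ and along $\mathbf{e}_{\bar{\beta}}$, and subtract to form $E_{\alpha\beta:\gamma}-E_{\alpha\gamma:\beta}$. This produces four groups of terms,
\[
(S_{:\alpha:\beta:\gamma}-S_{:\alpha:\gamma:\beta})+(H_{\alpha\beta\mu:\gamma}-H_{\alpha\gamma\mu:\beta})S_{:\mu}+(H_{\alpha\beta\mu}S_{:\mu:\gamma}-H_{\alpha\gamma\mu}S_{:\mu:\beta})+(S_{:\gamma}\delta_{\alpha\beta}-S_{:\beta}\delta_{\alpha\gamma}).
\]
For the second group I would note that the Codazzi equation (\ref{codazzi H}) makes the vertical derivative of $H$ symmetric in its last two indices; combined with the full symmetry of the Cartan tensor $H_{\alpha\beta\mu}$, this renders $H_{\alpha\beta\mu:\gamma}$ totally symmetric in $\alpha,\beta,\mu,\gamma$, so that $H_{\alpha\beta\mu:\gamma}=H_{\alpha\gamma\mu:\beta}$ and the whole group vanishes.

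For the third group I would substitute (\ref{hessian S}) once more in the form $S_{:\mu:\gamma}=E_{\mu\gamma}-H_{\mu\gamma\lambda}S_{:\lambda}-S\delta_{\mu\gamma}$. Its $E$-part reproduces exactly the right-hand side $H_{\alpha\beta\mu}E_{\mu\gamma}-H_{\alpha\gamma\mu}E_{\mu\beta}$ of (\ref{codazzi E}); the $S\delta$-part cancels by the symmetry of $H$; and there remains a correction $-(H_{\alpha\beta\mu}H_{\mu\gamma\lambda}-H_{\alpha\gamma\mu}H_{\mu\beta\lambda})S_{:\lambda}$. It then remains to show that the first group cancels this correction together with the last group. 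For this I would invoke the Ricci identity for the covector $S_{:\alpha}$ on the fibre $(S_xM,\dot{g}|_{T(S_xM)})$, namely $S_{:\alpha:\beta:\gamma}-S_{:\alpha:\gamma:\beta}=-S_{:\mu}\dot{R}_{\mu\alpha\beta\gamma}$ with the sign convention of (\ref{curvature lcd}), and substitute the explicit fibre curvature (\ref{curvature lc}): after relabeling by the symmetry of $H$, the $HH$-part of $-S_{:\mu}\dot{R}_{\mu\alpha\beta\gamma}$ equals $(H_{\alpha\beta\mu}H_{\mu\gamma\lambda}-H_{\alpha\gamma\mu}H_{\mu\beta\lambda})S_{:\lambda}$ and the $\delta$-part equals $S_{:\beta}\delta_{\alpha\gamma}-S_{:\gamma}\delta_{\alpha\beta}$, which precisely cancel the correction and the last group.

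The main obstacle is purely the bookkeeping of this last step: matching the index slots of the fibre curvature (\ref{curvature lc}) to the derivative and covector positions in the Ricci identity, and confirming the signs against the curvature convention (\ref{curvature lcd}), so that both the $HH$ contributions and the $\delta$ contributions cancel exactly rather than merely up to sign. Once these cancellations are verified, only the term $H_{\alpha\beta\mu}E_{\mu\gamma}-H_{\alpha\gamma\mu}E_{\mu\beta}$ survives, which is (\ref{codazzi E}).
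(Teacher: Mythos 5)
Your proposal is correct and follows essentially the same route as the paper's own proof: differentiate the vertical Hessian equation for $S$, split into the same four groups, kill the $H_{\alpha\beta\mu:\gamma}S_{:\mu}$ terms via (\ref{codazzi H}), substitute (\ref{hessian S}) back into the $H\cdot S_{:\mu:\gamma}$ terms, and cancel the leftover $HH$- and $\delta$-terms against the Ricci identity with the fibre curvature (\ref{curvature lc}). Your sign bookkeeping ($-S_{:\mu}\dot{R}_{\mu\alpha\beta\gamma}=S_{:\mu}\dot{R}_{\alpha\mu\beta\gamma}$) agrees with the paper's convention, and you are in fact slightly more explicit than the paper in justifying why the derivative-of-$H$ group vanishes.
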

\begin{proof}
	The covariant derivative of (\ref{hessian S}) along fibres by the Levi-Civita connection $\dot{\nabla}^{\mathscr{F}}$ shows that
	\begin{equation}\label{codazi 1}
		E_{\alpha\beta:\gamma}=S_{:\alpha:\beta:\gamma}+H_{\alpha\beta\mu:\gamma}S_{:\mu}+H_{\alpha\beta\mu}S_{:\mu:\gamma}+S_{:\gamma}\delta_{\alpha\beta}.
	\end{equation}
By (\ref{hessian S}), (\ref{codazzi H}) and (\ref{codazi 1}), one obtains
\begin{equation}\label{codazzi 2}
	\begin{split}
		 &E_{\alpha\beta:\gamma}-E_{\alpha\gamma:\beta}\\
		=&S_{:\alpha:\beta:\gamma}-S_{:\alpha:\gamma:\beta}+(H_{\alpha\beta\mu:\gamma}S_{:\mu}-H_{\alpha\gamma\mu:\beta}S_{:\mu})\\
		&+(H_{\alpha\beta\mu}S_{:\mu:\gamma}-H_{\alpha\gamma\mu}S_{:\mu:\beta})+(S_{:\gamma}\delta_{\alpha\beta}-S_{:\beta}\delta_{\alpha\gamma})\\
		=&S_{:\alpha:\beta:\gamma}-S_{:\alpha:\gamma:\beta}+(S_{:\gamma}\delta_{\alpha\beta}-S_{:\beta}\delta_{\alpha\gamma})\\
		&+[H_{\alpha\beta\mu}(E_{\mu\gamma}-H_{\mu\gamma\nu}S_{:\nu}-S\delta_{\mu\gamma})-H_{\alpha\gamma\mu}(E_{\mu\beta}-H_{\mu\beta\nu}S_{:\nu}-S\delta_{\mu\beta})]\\
		=&S_{:\alpha:\beta:\gamma}-S_{:\alpha:\gamma:\beta}+(S_{:\gamma}\delta_{\alpha\beta}-S_{:\beta}\delta_{\alpha\gamma})\\
		&+S_{:\nu}(H_{\alpha\gamma\mu}H_{\mu\beta\nu}-H_{\alpha\beta\mu}H_{\mu\gamma\nu})+(H_{\alpha\beta\mu}E_{\mu\gamma}-H_{\alpha\gamma\mu}E_{\mu\beta}).
	\end{split}
\end{equation}
By (\ref{curvature lc}), the Ricci identity holds
\begin{equation}\label{Ricci id}
	\begin{split}
		&S_{:\alpha:\beta:\gamma}-S_{:\alpha:\gamma:\beta}=S_{:\nu}\dot{R}_{\alpha\nu\beta\gamma}\\
		=&S_{:\nu}[(H_{\alpha\mu\beta}H_{\mu\nu\gamma}-H_{\alpha\mu\gamma}H_{\mu\nu\beta})-(\delta_{\alpha\beta}\delta_{\nu\gamma}-\delta_{\alpha\gamma}\delta_{\nu\beta})]\\
		=&S_{:\nu}(H_{\alpha\mu\beta}H_{\mu\nu\gamma}-H_{\alpha\mu\gamma}H_{\mu\nu\beta})-(S_{:\gamma}\delta_{\alpha\beta}-S_{:\beta}\delta_{\alpha\gamma}) 
	\end{split}
\end{equation} 
Plugging (\ref{Ricci id}) into (\ref{codazzi 2}), one has
\begin{equation*}
	\begin{split}
		&E_{\alpha\beta:\gamma}-E_{\alpha\gamma:\beta}\\
	   =&S_{:\nu}(H_{\alpha\mu\beta}H_{\mu\nu\gamma}-H_{\alpha\mu\gamma}H_{\mu\nu\beta})-(S_{:\gamma}\delta_{\alpha\beta}-S_{:\beta}\delta_{\alpha\gamma})\\
	   &+(S_{:\gamma}\delta_{\alpha\beta}-S_{:\beta}\delta_{\alpha\gamma})+S_{:\nu}(H_{\alpha\gamma\mu}H_{\mu\beta\nu}-H_{\alpha\beta\mu}H_{\mu\gamma\nu})+(H_{\alpha\beta\mu}E_{\mu\gamma}-H_{\alpha\gamma\mu}E_{\mu\beta})\\
	   =&H_{\alpha\beta\mu}E_{\mu\gamma}-H_{\alpha\gamma\mu}E_{\mu\beta}.
	\end{split}
\end{equation*}
The proof is complete.
\end{proof}
Now we present a proof of Theorem \ref{th 1}.
\begin{proof}[The proof of Theorem \ref{th 1}]
	Assume that the mean Berwald curvature satisfies (\ref{iso E}), or equivalently 
	\begin{equation}\label{iso E'}
		E_{\alpha\beta}=\frac{1}{n-1}\mathsf{e}\delta_{\alpha\beta}.
	\end{equation}
Thus
\begin{equation}\label{a}
	E_{\alpha\beta:\gamma}=\frac{1}{n-1}\mathsf{e}_{:\gamma}\delta_{\alpha\beta}.
\end{equation}
On the other hand, by (\ref{codazzi E}) and (\ref{iso E'}), one directly has
\begin{equation}\label{codazzi 3}
	E_{\alpha\beta:\gamma}-E_{\alpha\gamma:\beta}=\frac{\mathsf{e}}{n-1}(H_{\alpha\beta\mu}\delta_{\mu\gamma}-H_{\alpha\gamma\mu}\delta_{\mu\beta})=0.
\end{equation}
Plugging (\ref{a}) into (\ref{codazzi 3}) yields
\begin{equation*}
	0=\mathsf{e}_{:\gamma}\delta_{\alpha\beta}-\mathsf{e}_{:\beta}\delta_{\alpha\gamma}.
\end{equation*}
Hence
\begin{equation*}
	0=(n-2)\mathsf{e}_{:\gamma}.
\end{equation*}
Assume that $n>2$, we obtain $\mathsf{e}_{:\gamma}=0$. Therefore the Berwald scalar curvature $\mathsf{e}$ is locally constant along fibres. By the connectedness of the fibres of the sphere bundle, we conclude that $\mathsf{e}$ is constant along fibres and 
$\mathsf{e}\in C^{\infty}(M)$.
\end{proof}


\begin{thebibliography}{99}












\bibitem{BaoChernShen} David Bao, Shiing-Shen Chern and Zhongmin Shen,
\textsl{An Introduction to Riemann-Finsler Geometry.} Graduate Texts
in Mathematics, Vol. 200, Springer-Verlag, New York, Inc., 2000.












\bibitem{ChernShen} Shiing-Shen Chern and Zhongmin Shen, \textsl{Riemann-Finsler Geometry.} Nankai Tracts in Mathematics, Vol. 6, World Scientific, 2005.




\bibitem{Cra3} Mike Crampin, \textsl{S-curvature, E-curvature, and Berwald scalar curvature of Finsler spaces.} preprint, 2022.



\bibitem{FL} Huitao Feng and Ming Li, \textsl{Adiabatic limit and connections in Finsler geometry.}
Communications in Analysis and Geometry, Vol. 21, No. 3, 2013: 607-624.





























\bibitem{Li3} Ming Li, \textsl{On isotropic Berwald scalar curvature.} arXiv:2205.04900, 2022.

\bibitem{Lizhang} Ming Li and Lihong Zhang, \textsl{Properties of Berwald scalar curvature.} Front. Math. China, 15(6), 2020: 1143-1153.










\bibitem{Mo} Xiaohuan Mo, \textsl{An Introduction to Finsler Geometry.} Peking University series in Math., Vol. 1, World Scientific Publishing Co. Pte. Ltd., 2006.














\bibitem{SS} Yibing Shen and  Zhongmin Shen, \textsl{Introduction to Modern Finsler Geometry.} World Scientific, Singapore, 2016.





























\bibitem{Zhang} Weiping Zhang, \textsl{Lectures on Chern-Weil Theory and Witten Deformations}. Nankai Tracts in Mathematics, Vol. 4, World Scientific Publishing Co. Pte. Ltd., 2001.





\end{thebibliography}
\end{document}